\newtheorem{Theorem}{Theorem} 
\newtheorem{lemma}[Theorem]{Lemma}
\newtheorem{fact}{Fact}
\theoremstyle{definition}
\newtheorem{remark}{Remark}
\newcommand{\R}{{\mathbb R}}
\newcommand{\N}{{\mathbb N}}
\DeclareMathOperator{\argmin}{argmin}
\newcommand{\interior}{{\rm int}\kern 0.06em}
\begin{document}

\title{The rate of convergence of Nesterov's accelerated forward-backward method is actually faster than $1/k^{-2}$}

\author{Hedy Attouch}
\address{Institut de Math\'ematiques et Mod\'elisation de Montpellier, UMR 5149 CNRS, Universit\'e Montpellier 2, place Eug\`ene Bataillon,
34095 Montpellier cedex 5, France}
\email{hedy.attouch@univ-montp2.fr}

\author{Juan Peypouquet}
\address{Departamento de Matem\'atica, Universidad T\'ecnica Federico Santa Mar\'\i a, Av Espa\~na 1680, Valpara\'\i so, Chile}
\email{juan.peypouquet@usm.cl}



\keywords{Convex optimization, fast convergent methods, Nesterov method}

\thanks{Effort sponsored by the Air Force Office of Scientific Research, Air Force Material Command, USAF, under grant number FA9550-14-1-0056. Also supported by Fondecyt Grant 1140829, Conicyt Anillo ACT-1106, ECOS-Conicyt Project C13E03, Millenium Nucleus ICM/FIC RC130003, Conicyt Project MATHAMSUD 15MATH-02, Conicyt Redes 140183, and Basal Project CMM Universidad de Chile. Part of this research was carried out while the authors were visiting Hangzhou Dianzi University by invitation of Professor Hong-Kun Xu.}

\begin{abstract}
The {\it forward-backward algorithm} is a powerful tool for solving optimization problems with a {\it additively separable} and {\it smooth} + {\it nonsmooth} structure. In the convex setting, a simple but ingenious acceleration scheme developed by Nesterov has been proved useful to improve the theoretical rate of convergence for the function values from the standard $\mathcal O(k^{-1})$ down to $\mathcal O(k^{-2})$. In this short paper, we prove that the rate of convergence of a slight variant of Nesterov's accelerated forward-backward method, which produces {\it convergent} sequences, is actually $o(k^{-2})$, rather than $\mathcal O(k^{-2})$. Our arguments rely on the connection between this algorithm and a second-order differential inclusion with vanishing damping.\\

Final version published at SIOPT.
\end{abstract}

\maketitle

\section*{Introduction}

Let $\mathcal H$ be a real Hilbert space endowed with the scalar product $\langle \cdot,\cdot\rangle$ and norm $\|\cdot\|$, and consider the problem
\begin{equation}\label{algo1}
\min \left\lbrace  \Psi (x) + \Phi (x): \ x \in \mathcal H   \right\rbrace   
\end{equation}
where $\Psi: \mathcal H \to  \mathbb R \cup \lbrace   + \infty  \rbrace $ is a proper lower-semicontinuous convex function, and $\Phi: \mathcal H \to  \mathbb R$ is a continuously differentiable convex function, whose gradient is Lipschitz continuous. 

Based on the {\it gradient projection} algorithm of \cite{Gol} and \cite{LevPol}, the {\it forward-backward} method was proposed in \cite{LioMer}, and \cite{Pas} to overcome the inherent difficulties of minimizing the nonsmooth sum of two functions, as in \eqref{algo1}, while exploiting its {\it additively separable} and {\it smooth} + {\it nonsmooth} structure. It gained popularity in image processing following \cite{DauDefDeM} and \cite{ComWaj}: when $\Psi$ is the $\ell^1$ norm in $\R^N$ and $\Phi$ is quadratic, this gives the {\it Iterative Shrinkage-Thesholding Algorithm} (ISTA). Some time later, a decisive improvement came with \cite{BT}, where ISTA was successfully combined with Nesterov's acceleration scheme \cite{Nest1} producing the {\it Fast Iterative Shrinkage-Thesholding Algorithm} (FISTA). For general $\Phi$ and $\Psi$, and after some simplification, the {\it Accelerated Forward-Backward} method can be written as
\begin{equation}\label{algo7b}
\left\{
\begin{array}{rcl}
y_k&= &  x_{k} + \frac{k -1}{k  + \alpha -1} ( x_{k}  - x_{k-1}) \\
\rule{0pt}{20pt}
x_{k+1}& = &\mbox{prox}_{s \Psi} \left( y_k- s (\nabla \Phi (y_k))\right),
\end{array}\right.
\end{equation}
where $\alpha>0$ and $s>0$. This algorithm is also in close connection with the proximal-based inertial algorithms \cite{AA1}, \cite{MO} and \cite{LP}. The choice $\alpha=3$ is current common practice. The remarkable property of this algorithm is that, despite its simplicity and computational efficiency $-$equivalent to that of the classical forward-backward method$-$, it guarantees a rate of convergence of $\mathcal O(k^{-2})$, where $k$ is the number of iterations, for the minimization of the function values, instead of the classical $\mathcal O(k^{-1})$ that is obtained for the unaccelerated counterpart. However, while sequences generated by the classical forward backward method are convergent, the convergence of the sequence $(x_k)$ generated by \eqref{algo7b} to a minimizer of $\Phi+\Psi$ puzzled researchers for over two decades. This question was recently settled in \cite{CD} and \cite{ACPR} independently, and using different arguments. In \cite{CD}, the authors use a {\it descent} inequality satisfied by forward-backward iterations. A perspicuous abstract presentation of this idea is given in \cite[Section 2.2]{Chamb_Pock}. In turn, the proof given in \cite{ACPR} relies on the connection between \eqref{algo7b} and the differential inclusion
\begin{equation}\label{algo2}
\ddot{x}(t) + \frac{\alpha}{t} \dot{x}(t)  + \partial \Psi (x(t)) + \nabla \Phi (x(t)) \ni 0.
\end{equation}
Indeed, as pointed out in \cite{SBC,ACPR}, algorithm \eqref{algo7b} can be seen as an appropriate finite-difference discretization of \eqref{algo2}. In \cite{SBC}, the authors studied 
\begin{equation}\label{E:diff_eq}
\ddot{x}(t) + \frac{\alpha}{t} \dot{x}(t)  + \nabla \Theta (x(t)) = 0.
\end{equation}
and proved that $$\Theta(x(t))-\min\Theta=\mathcal O(t^{-2})$$ when $\alpha\ge 3$. Convergence of the trajectories was obtained in \cite{ACPR} for $\alpha>3$. The study of the long-term behavior of the trajectories satisfying this evolution equation has given important insight into Nesterov's acceleration method and its variants, and the present work is inspired in this relationship. If $\alpha>3$, we actually have $$\Theta(x(t))-\min\Theta=o(t^{-2}).$$
Although it can be derived from the arguments in \cite{ACPR}, it was May \cite{May} who first pointed out this fact, giving a different proof. This is another justification for the interest of taking $\alpha >3$ instead of $\alpha=3$.

The purpose of this paper is to show that sequences generated by Nesterov's accelerated version of the forward-backward method approximate the optimal value of the problem with a rate that is strictly faster than $\mathcal O(k^{-2})$. More precisely, we prove the following:

\begin{Theorem} \label{T:main}
Let  $\Psi: \mathcal H \to  \mathbb R \cup \lbrace + \infty  \rbrace  $ be proper, lower-semicontinuous and convex, and let $\Phi: \mathcal H \to  \mathbb R$ be convex and continuously differentiable with $L$-Lipschitz continuous gradient. Suppose that $S=\argmin (\Psi + \Phi)\neq\emptyset$, and let
$(x_k)$ be a sequence generated by algorithm \eqref{algo7b} with $\alpha > 3$ and $ 0< s < \frac{1}{L} $. Then, the function values and the velocities satisfy
$$\lim\limits_{k\to\infty}k^2\Big((\Psi + \Phi)(x_k)-  \min(\Psi + \Phi)\Big) = 0\qquad\hbox{and}\qquad\lim\limits_{k\to\infty}k\|x_{k+1}-x_{k}\|=0,$$ 
respectively. In other words, $$(\Psi + \Phi)(x_k)-  \min(\Psi + \Phi) =o(k^{-2})\qquad\hbox{and}\qquad\|x_{k+1}-x_{k}\|=o(k^{-1}).$$
\end{Theorem}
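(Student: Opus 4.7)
The proof adapts to the discrete setting May's argument \cite{May} for the continuous equation \eqref{E:diff_eq} with $\alpha > 3$. The starting point is a discrete Lyapunov function of the form
$$\mathcal{E}_k = \theta_k \bigl[(\Psi+\Phi)(x_k) - \min(\Psi+\Phi)\bigr] + \tfrac{1}{2}\|\lambda(x_k - x^*) + b_k(x_k - x_{k-1})\|^2 + \xi \|x_k - x^*\|^2,$$
where $x^*$ is any minimizer, $\theta_k$ is quadratic in $k$, $b_k$ is linear in $k$, and $\lambda,\xi > 0$ are parameters to be chosen. Without the $\xi\|x_k - x^*\|^2$ term this reduces (up to scaling) to the Lyapunov energy used in the $\mathcal{O}(k^{-2})$ analyses of \cite{BT,CD,ACPR}; the enrichment is what allows the surplus damping available when $\alpha > 3$ to be extracted as a summable contribution.

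The first task is to prove a discrete energy inequality of the form
$$\mathcal{E}_{k+1} - \mathcal{E}_k \le -c_1(\alpha-3)\, k \bigl[(\Psi+\Phi)(x_k) - \min(\Psi+\Phi)\bigr] - c_2\, k\, \|x_{k+1}-x_k\|^2$$
for some $c_1, c_2 > 0$. The main analytic inputs are the standard descent inequalities for a forward-backward step from $y_k$ to $x_{k+1}$, comparing $(\Psi+\Phi)(x_{k+1})$ with its values at $y_k$ and at $x^*$, together with the inertial interpolation formula for $y_k$ in \eqref{algo7b}. One expands $\mathcal{E}_{k+1}-\mathcal{E}_k$, inserts these descent inequalities at the appropriate test points, and tracks all coefficients carefully; the parameters $\theta_k$, $b_k$, $\lambda$ and $\xi$ are pinned down by requiring that the non-sign-definite cross-terms cancel, leaving only the two negative contributions above. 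This algebraic bookkeeping is the most delicate step of the argument.

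Summing the energy inequality yields two facts simultaneously: the sequence $\{\mathcal{E}_k\}$ is non-increasing and bounded, which recovers the $\mathcal{O}(k^{-2})$ bound for the objective gap and the $\mathcal{O}(k^{-1})$ bound for the displacements, while the dissipation terms give the sharper summability
$$\sum_k k\,\bigl[(\Psi+\Phi)(x_k) - \min(\Psi+\Phi)\bigr] < \infty, \qquad \sum_k k\,\|x_{k+1}-x_k\|^2 < \infty.$$

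The main obstacle is the final upgrade from summability to the $o$-rates, since $\sum_k k a_k < \infty$ does not by itself imply $k^2 a_k \to 0$. The plan is to exploit the weak convergence of $(x_k)$ to a minimizer $x^*$ (established in \cite{CD,ACPR} for $\alpha > 3$) together with the just-proved estimate $\|x_{k+1}-x_k\| = \mathcal{O}(1/k)$ to show that the quadratic piece of $\mathcal{E}_k$ admits a limit as $k \to \infty$. Since $\mathcal{E}_k$ itself is non-increasing and non-negative, this forces $L_k := \theta_k\,[(\Psi+\Phi)(x_k) - \min(\Psi+\Phi)]$ to have a finite limit $L \in [0,\infty)$. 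The summability $\sum_k L_k/k < \infty$, combined with the divergence of the harmonic series, then rules out $L > 0$, so $L_k \to 0$, i.e. $k^2\,[(\Psi+\Phi)(x_k) - \min(\Psi+\Phi)] \to 0$. A completely parallel argument applied to the squared-displacement sequence $k^2\|x_{k+1}-x_k\|^2$ delivers $k\,\|x_{k+1}-x_k\| \to 0$.
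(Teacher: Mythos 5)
Your overall architecture matches the paper's: an energy of essentially the same form as \eqref{algo9b} yields the summability estimates $\sum_k k\big(\Theta(x_k)-\Theta(x^*)\big)<\infty$ and $\sum_k k\|x_{k+1}-x_k\|^2<\infty$ (the paper's Facts \ref{F:kThetak_finite} and \ref{F:k_vk2_finite}), and the final step is the same harmonic-series observation that a \emph{convergent} sequence $\beta_k$ with $\sum_k\beta_k/k<\infty$ must tend to $0$. The gap is in how you propose to establish that the relevant quantity converges. You claim that weak convergence of $(x_k)$ plus $\|x_{k+1}-x_k\|=\mathcal O(1/k)$ forces the quadratic piece of your energy, $\tfrac12\|\lambda(x_k-x^*)+b_k(x_k-x_{k-1})\|^2+\xi\|x_k-x^*\|^2$ with $b_k$ linear in $k$, to admit a limit. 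This does not follow: expanding the square produces the terms $b_k^2\|x_k-x_{k-1}\|^2\sim k^2\|x_k-x_{k-1}\|^2$ and $2\lambda b_k\langle x_k-x^*,x_k-x_{k-1}\rangle$, for which your estimates give only \emph{boundedness}, not convergence. Worse, the convergence of $k^2\|x_k-x_{k-1}\|^2$ is essentially one of the two conclusions of the theorem, so assuming the quadratic piece converges begs the question. The same circularity undermines the closing sentence: the ``completely parallel argument'' for $k^2\|x_{k+1}-x_k\|^2$ presupposes that this sequence has a limit, and no mechanism for that is supplied.

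The paper fills precisely this hole with a separate device (Lemma \ref{F:lim_kvk_exists}): writing $d_k=\tfrac{1}{2s}\|x_{k+1}-x_k\|^2$ and $\theta_k=\Theta(x_k)-\Theta(x^*)$, the descent inequality of the forward-backward step tested at $y=y_k$, $x=x_k$ gives \eqref{E:mechanical_energy_decrease}, from which one extracts
$$\Big[k^2d_k+(k+1)^2\theta_{k+1}\Big]-\Big[(k-1)^2d_{k-1}+k^2\theta_k\Big]\le 2(k+1)\theta_{k+1},$$
i.e.\ the \emph{combined} quantity $k^2d_k+(k+1)^2\theta_{k+1}$ is nonnegative and quasi-monotone with summable positive increments (summable by Fact \ref{F:kThetak_finite}), hence convergent. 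Only then does the $\sum\beta_k/k<\infty$ argument apply, and treating the sum of the two terms at once dispenses with any separate argument for the velocities. Note also that the paper's logical order is the reverse of yours: weak convergence of the iterates is \emph{deduced from} these estimates via Opial's lemma, not used as an input. To repair your proof you would need to prove the analogue of \eqref{E:dk_thetak} for your enriched energy, or simply adopt the paper's Lemma \ref{F:lim_kvk_exists}.
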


Moreover, we recover some results from \cite[Section 5]{ACPR}, closely connected with the ones in \cite{CD}, with simplified arguments. As shown in \cite[Example 2.13]{ACPR}, there is no $p>2$ such that the order of convergence is $\mathcal O(k^{-p})$ for every $\Phi$ and $\Psi$. In this sense, Theorem \ref{T:main} is optimal.

We close this paper by establishing a tolerance estimation that guarantees that the order of convergence is preserved when the iterations given in \eqref{algo7b} are computed inexactly (see Theorem \ref{T:errors}). Inexact FISTA-like algorithms have also been considered in \cite{SLB,VSBV}.


\section{Main results}

Throughout this section, $\Psi: \mathcal H \to  \mathbb R \cup \lbrace   + \infty  \rbrace  $ is proper, lower-semicontinuous and convex, and $\Phi: \mathcal H \to  \mathbb R$ is convex and continuously differentiable with $L$-Lipschitz continuous gradient. To simplify the notation, we set $\Theta = \Psi + \Phi$. We assume that $S=\argmin (\Psi + \Phi)\neq\emptyset$, and consider a sequence $(x_k)$ generated by algorithm \eqref{algo7b} with $\alpha \geq 3$ and $0<s<\frac{1}{L}$. For standard notation and convex analysis background, see \cite{BC,Pey}.

\subsection{Some important estimations}

We begin by establishing the basic properties of the sequence $(x_k)$. Some results can be found in \cite{CD,ACPR}, for which we provide simplified proofs.\\

Let $x^{*} \in \argmin \Theta$. For each $k\in\N$, set 
\begin{equation}\label{algo9b}
\mathcal E (k):= \frac{2s}{\alpha -1} \left( k + \alpha -2\right)^2    (\Theta (x_k) - \Theta(x^{*})) + (\alpha -1)
 \| z_k -x^{*} \|^2,
\end{equation}
where
\begin{equation}\label{algo9c}
z_k := \frac{k + \alpha -1}{\alpha -1}y_k - \frac{k}{\alpha -1}x_k= x_{k} + \frac{k -1}{\alpha -1} ( x_{k}  - x_{k-1}).
\end{equation}
The key idea is to verify that the sequence $(\mathcal E(k))$ has Lyapunov-type properties. By introducing the operator $G_s: \mathcal H \rightarrow \mathcal H$, defined by
$$
G_{s} (y) = \frac{1}{s}\left( y - \mbox{prox}_{s \Psi} \left( y- s \nabla \Phi(y)  \right) \right)
$$
for each $y \in \mathcal H$, the formula for $x_{k+1}$ in algorithm \eqref{algo7b} can be rewritten as
\begin{equation}\label{E:x_{k+1}}
x_{k+1} = y_k - s G_{s} (y_k).
\end{equation}
The variable $z_k$, defined in \eqref{algo9c}, will play an important role. Simple algebraic manipulations give
\begin{equation}\label{algo14a}
z_{k+1}= \frac{k + \alpha -1}{\alpha -1}   \left(y_k - s G_{s} (y_k) \right)  -\frac{k}{\alpha -1}x_{k}
= z_k -\frac{s}{\alpha -1} \left( k + \alpha -1\right)  G_{s} (y_k).
\end{equation}
The operator $G_{s}$ satisfies
\begin{equation}\label{algo14}
 \Theta(y - sG_{s} (y)) \leq \Theta (x) + \left\langle  G_{s} (y), y-x \right\rangle -\frac{s}{2} \|  G_{s} (y) \|^2 .
\end{equation}
for all $x, y\in \mathcal H$ (see \cite{BT}, \cite{CD}, \cite{PB}, \cite{SBC}), since $s \leq \frac{1}{L}$, and $\nabla \Phi$ is $L$-lipschitz continuous. Let us write successively this formula at $y=y_k$ and $x= x_k$, then at $y=y_k$ and $x= x^{*}$. We obtain 
\begin{equation}\label{E:G_xk_yk}
\Theta(y_k - sG_{s} (y_k)) \leq \Theta(x_k) + \left\langle  G_{s} (y_k), y_k-x_k \right\rangle -\frac{s}{2} \|  G_{s} (y_k) \|^2
\end{equation}
and
\begin{equation} \label{E:G_xk_x*}
\Theta(y_k - sG_{s} (y_k)) \leq \Theta(x^{*}) + \left\langle  G_{s} (y_k), y_k-x^{*} \right\rangle -\frac{s}{2} \|  G_{s} (y_k) \|^2,
\end{equation} 
respectively. Multiplying the first inequality by $\frac{k}{k + \alpha -1}$, and the second one by $\frac{\alpha -1}{k + \alpha -1}$, then adding the two resulting inequalities, and using the fact that $x_{k+1} = y_k - s G_{s} (y_k)$, we obtain
$$
\Theta(x_{k+1}) \leq \frac{k}{k + \alpha -1} \Theta(x_k) + \frac{\alpha -1}{k + \alpha -1}\Theta(x^{*})-\frac{s}{2} \|G_{s} (y_k) \|^2 + \left\langle  G_{s} (y_k), \frac{k}{k + \alpha -1}(y_k-x_k ) + \frac{\alpha -1}{k + \alpha -1} (y_k-x^{*})\right\rangle .
$$
Since
$$
\frac{k}{k + \alpha -1}(y_k-x_k ) + \frac{\alpha -1}{k + \alpha -1} (y_k-x^{*})	=\frac{\alpha -1}{k + \alpha -1}(z_k-x^*),
$$
we obtain
\begin{equation}\label{algo17}
 \Theta(x_{k+1}) \leq \frac{k}{k + \alpha -1} \Theta(x_k) + \frac{\alpha -1}{k + \alpha -1}\Theta(x^{*}) +
 \frac{\alpha -1}{k + \alpha -1} 
 \left\langle  G_{s} (y_k), z_k -x^{*}\right\rangle -\frac{s}{2} \|  G_{s} (y_k) \|^2 .
\end{equation}
We shall obtain a recursion from  \eqref{algo17}. To this end, observe that \eqref{algo14a} gives
$$
z_{k+1} -x^{*} = z_k  -x^{*} -\frac{s}{\alpha -1} \left( k + \alpha -1\right)  G_{s} (y_k) .
$$   
After developing
$$
\| z_{k+1} -x^{*} \|^2 = \| z_{k} -x^{*} \|^2 
-2\frac{s}{\alpha -1} \left( k + \alpha -1\right) 
\left\langle   z_{k} -x^{*}, G_{s} (y_k)  \right\rangle  + \frac{s^2}{(\alpha -1)^2} \left( k + \alpha -1\right)^2  \|  G_{s} (y_k)  \|^2 ,
$$
and multiplying the above expression by 
$\frac{(\alpha -1)^2} {2s\left( k + \alpha -1\right)^2}$,
we obtain
$$
\frac{(\alpha -1)^2} {2s\left( k + \alpha -1\right)^2}
\left( \| z_{k} -x^{*} \|^2 -\| z_{k+1} -x^{*} \|^2 \right)  = \frac{\alpha -1}{k + \alpha -1} 
 \left\langle  G_{s} (y_k), z_k -x^{*}\right\rangle -\frac{s}{2} \|  G_{s} (y_k) \|^2 .
$$
Replacing this in \eqref{algo17}, we deduce that
\begin{equation*}
 \Theta(x_{k+1}) \leq \frac{k}{k + \alpha -1} \Theta(x_k) + \frac{\alpha -1}{k + \alpha -1}\Theta(x^{*}) +
 \frac{(\alpha -1)^2} {2s\left( k + \alpha -1\right)^2}\left( \| z_{k} -x^{*} \|^2 -\| z_{k+1} -x^{*} \|^2 \right) .
\end{equation*}
Equivalently,
\begin{equation*}
 \Theta(x_{k+1})-\Theta(x^{*}) \leq \frac{k}{k + \alpha -1} \left( \Theta(x_k) -\Theta(x^{*}) \right) + \frac{(\alpha -1)^2} {2s\left( k + \alpha -1\right)^2}\left( \| z_{k} -x^{*} \|^2 -\| z_{k+1} -x^{*} \|^2 \right) .
\end{equation*}
Multiplying by $ \frac{2s}{\alpha -1}\left( k + \alpha -1\right)^2    $, we obtain
\begin{align*}
\frac{2s}{\alpha -1}\left( k + \alpha -1\right)^2    \left( \Theta(x_{k+1})-\Theta (x^{*})\right)  \leq &\frac{2s}{\alpha -1} k\left( k + \alpha -1\right) \left( \Theta (x_k) -\Theta(x^{*}) \right) + (\alpha -1)
\left( \| z_{k} -x^{*} \|^2 -\| z_{k+1} -x^{*} \|^2 \right),
\end{align*}
which implies
$$
\frac{2s}{\alpha -1}\left( k + \alpha -1\right)^2    \left( \Theta(x_{k+1})-\Theta (x^{*})\right) +  2s \frac{\alpha -3}{\alpha -1}k \left( \Theta (x_k) -\Theta(x^{*}) \right)$$ 
$$\leq \frac{2s}{\alpha -1} \left( k + \alpha -2\right)^2 \left( \Theta (x_k) -\Theta(x^{*}) \right) +(\alpha -1)
\left( \| z_{k} -x^{*} \|^2 -\| z_{k+1} -x^{*} \|^2 \right),
$$
in view of
$$
k\left( k + \alpha -1\right) =\left( k + \alpha -2\right)^2 -k(\alpha -3) -(\alpha -2)^2 \leq \left( k + \alpha -2\right)^2 -k(\alpha -3).
$$
In other words,
\begin{equation} \label{E:energy_decrease}
\mathcal E(k+1) +  2s \frac{\alpha -3}{\alpha -1}k \left( \Theta (x_k) -\Theta(x^{*}) \right) \leq  \mathcal E(k).
\end{equation} 

We deduce the following:

\begin{fact}\label{F:energy_decrease}
The sequence $\big(\mathcal E(k)\big)$ is nonincreasing and $\lim\limits_{k\to\infty}\mathcal E(k)$ exists.
\end{fact}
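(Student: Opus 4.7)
The proof of Fact \ref{F:energy_decrease} should follow almost immediately from the inequality \eqref{E:energy_decrease} that was just derived. My plan is to simply extract the two needed conclusions from that recursion together with nonnegativity of $\mathcal E(k)$.

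First, I would observe that every term appearing in the definition \eqref{algo9b} of $\mathcal E(k)$ is nonnegative: the factor $\tfrac{2s}{\alpha-1}(k+\alpha-2)^2$ is positive because $s>0$ and $\alpha\ge 3$, the quantity $\Theta(x_k)-\Theta(x^*)$ is nonnegative because $x^*\in\argmin\Theta$, the factor $\alpha-1$ is positive, and $\|z_k-x^*\|^2\ge 0$. Hence $\mathcal E(k)\ge 0$ for every $k\in\N$.

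Next, I would use the standing assumption $\alpha\ge 3$ to conclude that the coefficient $2s\tfrac{\alpha-3}{\alpha-1}k$ appearing in \eqref{E:energy_decrease} is nonnegative, and since $\Theta(x_k)-\Theta(x^*)\ge 0$ as well, the extra term on the left-hand side of \eqref{E:energy_decrease} is nonnegative. Dropping it yields $\mathcal E(k+1)\le \mathcal E(k)$, so $(\mathcal E(k))$ is nonincreasing.

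Finally, a nonincreasing sequence of nonnegative real numbers is convergent, so $\lim_{k\to\infty}\mathcal E(k)$ exists (in fact in $[0,\mathcal E(0)]$). There is no real obstacle here; the only thing to keep in mind is that the conclusion requires $\alpha\ge 3$ rather than the stronger $\alpha>3$ that appears in Theorem \ref{T:main}, which is consistent with the standing assumption declared at the beginning of Section 1.
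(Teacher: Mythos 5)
Your proposal is correct and matches the paper's argument exactly: the paper also deduces Fact \ref{F:energy_decrease} directly from \eqref{E:energy_decrease} by dropping the nonnegative term $2s\frac{\alpha-3}{\alpha-1}k\left(\Theta(x_k)-\Theta(x^*)\right)$ (using $\alpha\ge 3$) and invoking monotone convergence of the nonnegative, nonincreasing sequence $\big(\mathcal E(k)\big)$. Your remark that only $\alpha\ge 3$ is needed here is also consistent with the paper's standing assumptions in this section.
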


In particular, $\mathcal E(k)\le\mathcal E(0)$ and we have:

\begin{fact}\label{F:Thetak=Ok2}
For each $k\ge 0$, we have $\displaystyle\Theta (x_k) - \Theta(x^{*})\le \frac{(\alpha-1)\mathcal E(0)}{2s( k + \alpha -2)^2}$ and $\displaystyle\| z_{k} -x^{*} \|^2 \le \frac{\mathcal E(0)}{\alpha-1}$.
\end{fact}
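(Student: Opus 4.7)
The plan is to read both bounds directly from the definition of $\mathcal E(k)$ combined with Fact \ref{F:energy_decrease}. By definition,
$$
\mathcal E(k) = \frac{2s}{\alpha-1}(k+\alpha-2)^{2}\,(\Theta(x_k)-\Theta(x^*)) + (\alpha-1)\|z_k-x^*\|^{2},
$$
and both summands are manifestly nonnegative (the first because $x^*\in\argmin\Theta$, the second trivially). Hence each one is bounded above by $\mathcal E(k)$ itself, and then by $\mathcal E(0)$ using the monotonicity established in Fact \ref{F:energy_decrease}.

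First I would drop the $\|z_k-x^*\|^{2}$ term to obtain
$$
\frac{2s}{\alpha-1}(k+\alpha-2)^{2}\bigl(\Theta(x_k)-\Theta(x^*)\bigr)\le \mathcal E(k)\le \mathcal E(0),
$$
and solve for $\Theta(x_k)-\Theta(x^*)$ to get the first inequality. Then, symmetrically, I would drop the function-value term to obtain
$$
(\alpha-1)\|z_k-x^*\|^{2}\le \mathcal E(k)\le \mathcal E(0),
$$
which upon division by $\alpha-1$ yields the second inequality.

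There is essentially no obstacle here: the content of the fact is a one-line bookkeeping consequence of the Lyapunov property already proven. The only thing to double-check is that the hypothesis $\alpha\ge 3$ is enough to ensure the energy is nonincreasing (so that $\mathcal E(k)\le\mathcal E(0)$); this is exactly what \eqref{E:energy_decrease} gives, since the extra term $2s\frac{\alpha-3}{\alpha-1}k(\Theta(x_k)-\Theta(x^*))$ on the left is nonnegative when $\alpha\ge 3$. No additional smallness assumption on $s$ beyond $s<1/L$ is needed for this step, since that was already used to derive \eqref{algo14} and hence \eqref{E:energy_decrease}.
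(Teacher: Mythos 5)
Your proposal is correct and is exactly the paper's argument: the paper derives this Fact in one line from $\mathcal E(k)\le\mathcal E(0)$ (Fact \ref{F:energy_decrease}) together with the nonnegativity of each summand in the definition \eqref{algo9b} of $\mathcal E(k)$. Your remark that the standing assumption $\alpha\ge 3$ suffices for the monotonicity via \eqref{E:energy_decrease} is also accurate.
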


From \eqref{E:energy_decrease}, we also obtain:

\begin{fact}\label{F:kThetak_finite}
If $\alpha>3$, then
$\displaystyle\sum_{k=1}^\infty k\Big(\Theta (x_k) - \Theta(x^{*})\Big)\le \frac{(\alpha-1)\mathcal E(1)}{2s(\alpha-3)}$.
\end{fact}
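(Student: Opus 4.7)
The plan is to derive this summability statement directly from the one-step Lyapunov inequality \eqref{E:energy_decrease}, which I would simply telescope. Rearranging \eqref{E:energy_decrease} as
$$2s\,\frac{\alpha-3}{\alpha-1}\,k\bigl(\Theta(x_k)-\Theta(x^*)\bigr)\;\le\;\mathcal E(k)-\mathcal E(k+1),$$
I would sum from $k=1$ to $k=N$ and observe that the right-hand side collapses to $\mathcal E(1)-\mathcal E(N+1)$.

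Next, the key structural observation to exploit is that $\mathcal E(k)\ge 0$ for every $k$. This is immediate from the definition \eqref{algo9b}: since $x^*\in\argmin\Theta$, we have $\Theta(x_k)-\Theta(x^*)\ge 0$, and of course $\|z_k-x^*\|^2\ge 0$, so each term in $\mathcal E(k)$ is nonnegative. Therefore $\mathcal E(1)-\mathcal E(N+1)\le\mathcal E(1)$ uniformly in $N$, and the hypothesis $\alpha>3$ allows us to divide by the positive constant $2s(\alpha-3)/(\alpha-1)$ without reversing the inequality, yielding
$$\sum_{k=1}^N k\bigl(\Theta(x_k)-\Theta(x^*)\bigr)\;\le\;\frac{\alpha-1}{2s(\alpha-3)}\,\mathcal E(1).$$

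Finally, since the partial sums of a series of nonnegative terms are monotone and bounded above by a constant independent of $N$, I let $N\to\infty$ to obtain the stated bound. There is no real obstacle here: the inequality \eqref{E:energy_decrease} already isolated the extra nonnegative term $2s\,\frac{\alpha-3}{\alpha-1}\,k\bigl(\Theta(x_k)-\Theta(x^*)\bigr)$, and the role of the assumption $\alpha>3$ is precisely to make its coefficient strictly positive so that the telescoping argument converts it into a convergent series. Note that this also recovers, as a by-product, the sharper-than-$\mathcal O(k^{-2})$ decay of $\Theta(x_k)-\Theta(x^*)$, which is one of the inputs used later in the proof of Theorem \ref{T:main}.
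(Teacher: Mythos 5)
Your proof is correct and is exactly the argument the paper intends: telescope the Lyapunov inequality \eqref{E:energy_decrease}, use the nonnegativity of $\mathcal E(k)$ (clear from \eqref{algo9b}), and divide by the positive constant $2s(\alpha-3)/(\alpha-1)$. One small caveat on your closing aside: summability of $k\bigl(\Theta(x_k)-\Theta(x^*)\bigr)$ by itself yields only $\liminf_{k\to\infty}k^2\bigl(\Theta(x_k)-\Theta(x^*)\bigr)=0$, not the full $o(k^{-2})$ rate, which is why the paper still needs Lemma \ref{F:lim_kvk_exists} to establish that the relevant limit exists.
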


Now, using \eqref{E:G_xk_yk} and recalling that $x_{k+1}=y_k-sG_s(y_k)$ and $y_k - x_{k} = \frac{k -1}{k  + \alpha -1} ( x_{k}  - x_{k-1}) $, we  obtain
\begin{equation} \label{E:mechanical_energy_decrease_pre}
 \Theta(x_{k+1}) + \frac{1}{2s}\| x_{k+1}-x_k \|^2      \leq \Theta(x_k) + \frac{1}{2s}\frac{(k -1)^2}{(k  + \alpha -1)^2}  \|  x_{k}  - x_{k-1} \|^2 .
\end{equation}
Subtract $\Theta(x^*)$ on both sides, and set $\theta_k := \Theta (x_k) -\Theta (x^{*}) $ and $d_k: = \frac{1}{2s}\|  x_{k+1}  - x_{k} \|^2 $. We can write \eqref{E:mechanical_energy_decrease_pre} as
\begin{equation} \label{E:mechanical_energy_decrease}
\theta_{k+1} + d_k \leq \theta_k + \frac{(k -1)^2}{(k  + \alpha -1)^2}d_{k-1}.
\end{equation}

Since $k+\alpha-1\ge k+1$, \eqref{E:mechanical_energy_decrease} implies
$$(k+1)^2d_k-(k -1)^2d_{k-1}\le (k+1)^2(\theta_k-\theta_{k+1}).$$
But then
$$(k+1)^2(\theta_k-\theta_{k+1})=k^2\theta_k-(k+1)^2\theta_{k+1}+(2k+1)\theta_k\le k^2\theta_k-(k+1)^2\theta_{k+1}+3k\theta_k$$
for $k\ge 1$, and so
\begin{eqnarray*}
2kd_k+k^2d_k-(k-1)^2d_{k-1} & \le & (k+1)^2d_k-(k -1)^2d_{k-1}\\
& \le & (k+1)^2(\theta_k-\theta_{k+1})\\
& \le & k^2\theta_k-(k+1)^2\theta_{k+1}+3k\theta_k
\end{eqnarray*}
for $k\ge 1$. Summing for $k=1,\dots,K$, we obtain
$$K^2d_K+2\sum_{k=1}^Kkd_k\le \theta_1+\frac{3(\alpha-1)\mathcal E(1)}{2s(\alpha-3)}$$
in view of Fact \ref{F:kThetak_finite}. In particular, we obtain

\begin{fact}\label{F:k_vk2_finite}
If $\alpha>3$, then
$\displaystyle\sum_{k=1}^\infty k\|  x_{k+1}  - x_{k} \|^2\le \frac{\alpha(3\alpha-5)\mathcal E(1)}{4s(\alpha-1)(\alpha-3)}$.
\end{fact}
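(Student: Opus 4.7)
The plan is to harvest the inequality
\[
K^2d_K+2\sum_{k=1}^Kkd_k\le \theta_1+\frac{3(\alpha-1)\mathcal E(1)}{2s(\alpha-3)}
\]
that has just been established, and then pass to the limit in $K$ after bounding $\theta_1$ in terms of $\mathcal E(1)$. The key structural observation is that the left-hand side has two nonnegative summands, so dropping $K^2 d_K$ and invoking monotone convergence of partial sums of nonnegative terms immediately gives
\[
2\sum_{k=1}^\infty kd_k\le \theta_1+\frac{3(\alpha-1)\mathcal E(1)}{2s(\alpha-3)}.
\]

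Next I need to eliminate $\theta_1$ in favor of $\mathcal E(1)$. Looking at the definition \eqref{algo9b} at $k=1$, the coefficient of $\theta_1$ is $\frac{2s}{\alpha-1}(\alpha-1)^2=2s(\alpha-1)$, and the remaining summand $(\alpha-1)\|z_1-x^*\|^2$ is nonnegative, so
\[
\theta_1\le \frac{\mathcal E(1)}{2s(\alpha-1)}.
\]
Plugging this in and putting the two terms over the common denominator $4s(\alpha-1)(\alpha-3)$ gives, after expanding $(\alpha-3)+3(\alpha-1)^2=3\alpha^2-5\alpha=\alpha(3\alpha-5)$, the bound
\[
\sum_{k=1}^\infty kd_k\le \frac{\alpha(3\alpha-5)\mathcal E(1)}{4s(\alpha-1)(\alpha-3)}.
\]

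Finally I translate this back to the claimed quantity by recalling that $d_k=\frac{1}{2s}\|x_{k+1}-x_k\|^2$, so the sum $\sum_k k\|x_{k+1}-x_k\|^2$ differs from $\sum_k kd_k$ only by the constant factor $2s$, and the advertised estimate follows.

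The only real obstacle is bookkeeping: keeping track of the $s$, $\alpha-1$ and $\alpha-3$ factors through the telescoping on the previous page and the one step of arithmetic above. There is no new analytic input beyond Fact \ref{F:kThetak_finite} (which was already used to obtain the partial-sum bound) and the elementary estimate on $\theta_1$ extracted from the definition of $\mathcal E(1)$; in particular, the hypothesis $\alpha>3$ is needed exactly to ensure that $\frac{3(\alpha-1)\mathcal E(1)}{2s(\alpha-3)}$ is finite, which is the same place it entered Fact \ref{F:kThetak_finite}.
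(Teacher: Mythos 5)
Your derivation is exactly the paper's (implicit) proof: the paper gives no detail beyond the displayed partial-sum inequality $K^2d_K+2\sum_{k=1}^Kkd_k\le \theta_1+\frac{3(\alpha-1)\mathcal E(1)}{2s(\alpha-3)}$, and the intended completion is precisely your route --- drop the nonnegative term $K^2d_K$, let $K\to\infty$, bound $\theta_1\le\frac{\mathcal E(1)}{2s(\alpha-1)}$ from the definition \eqref{algo9b} at $k=1$, and combine using $(\alpha-3)+3(\alpha-1)^2=\alpha(3\alpha-5)$ to obtain $\sum_{k\ge1}kd_k\le\frac{\alpha(3\alpha-5)\mathcal E(1)}{4s(\alpha-1)(\alpha-3)}$. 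One caveat: your final sentence is not literally correct. Since $\|x_{k+1}-x_k\|^2=2s\,d_k$, multiplying your (correct) bound on $\sum_{k\ge1}kd_k$ by $2s$ yields $\sum_{k\ge1}k\|x_{k+1}-x_k\|^2\le\frac{\alpha(3\alpha-5)\mathcal E(1)}{2(\alpha-1)(\alpha-3)}$, not the constant advertised in the Fact; the stated constant $\frac{\alpha(3\alpha-5)\mathcal E(1)}{4s(\alpha-1)(\alpha-3)}$ is the bound for $\sum_{k\ge1}kd_k$ itself. This is evidently a bookkeeping slip in the paper's statement (the conversion from $d_k$ back to $\|x_{k+1}-x_k\|^2$ was dropped), and it is harmless for everything downstream, where only the summability of $k\|x_{k+1}-x_k\|^2$ is used; but you should state the clean conclusion $\sum_{k\ge1}k\|x_{k+1}-x_k\|^2\le\frac{\alpha(3\alpha-5)\mathcal E(1)}{2(\alpha-1)(\alpha-3)}$ rather than assert that the advertised constant ``follows,'' because it does not.
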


\begin{remark}
Observe that the upper bounds given in Facts \ref{F:kThetak_finite} and \ref{F:k_vk2_finite} tend to $\infty$ as $\alpha$ tends to 3.
\end{remark}

\subsection{From $\mathcal O(k^{-2})$ to $o(k^{-2})$}\label{petit-o-discret}

Recall that $\Psi: \mathcal H \to  \mathbb R \cup \lbrace + \infty  \rbrace  $ is proper, lower-semicontinuous and convex, $\Phi: \mathcal H \to  \mathbb R$ is convex and continuously differentiable with $L$-Lipschitz continuous gradient, and $\Theta=\Phi+\Psi$. We suppose that $S=\argmin (\Psi + \Phi)\neq\emptyset$, and let $(x_k)$ be a sequence generated by algorithm \eqref{algo7b} with $\alpha > 3$ and $ 0< s < \frac{1}{L} $. We shall prove that the function values and the velocities satisfy
$$\lim\limits_{k\to\infty}k^2\Big((\Psi + \Phi)(x_k)-  \min(\Psi + \Phi)\Big) = 0\qquad\hbox{and}\qquad\lim\limits_{k\to\infty}k\|x_{k+1}-x_{k}\|=0,$$ 
respectively. In other words, $(\Psi + \Phi)(x_k)-  \min(\Psi + \Phi) =o(k^{-2})$ and $\|x_{k+1}-x_{k}\|=o(k^{-1})$.\\

The following result is new, and will play a central role in the proof of Theorem \ref{T:main}.

\begin{lemma}\label{F:lim_kvk_exists}
If $\alpha>3$, then $\lim\limits_{k\to\infty}\Big[k^2\|  x_{k+1}  - x_{k} \|^2+(k+1)^2\big(\Theta(x_{k+1})-\Theta(x^*)\big)\Big]$ exists.
\end{lemma}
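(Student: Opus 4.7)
My plan is to exhibit an almost-monotone Lyapunov-type sequence whose forward increments are dominated by a summable nonnegative quantity, and then to invoke the classical quasi-Fej\'er convergence principle. Writing $\theta_k:=\Theta(x_k)-\Theta(x^*)\ge 0$ and $d_k:=\frac{1}{2s}\|x_{k+1}-x_k\|^2\ge 0$ as in the paper, the target expression is (up to the constant $2s$ on the velocity term) a positive linear combination of $(k+1)^2\theta_{k+1}$ and $k^2 d_k$. I would take as Lyapunov candidate
\[
a_k := (k+1)^2\theta_{k+1}+k^2 d_k.
\]

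The first step is to multiply \eqref{E:mechanical_energy_decrease} by $(k+1)^2$: since $\alpha\ge 3$ gives $(k+1)^2\le(k+\alpha-1)^2$, this yields the clean inequality
\[
(k+1)^2\theta_{k+1}+(k+1)^2 d_k\le(k+1)^2\theta_k+(k-1)^2 d_{k-1}.
\]
The second step is to combine this with the algebraic decomposition
\[
a_k-a_{k-1}=\bigl[(k+1)^2\theta_{k+1}+(k+1)^2 d_k-(k+1)^2\theta_k-(k-1)^2 d_{k-1}\bigr]+(2k+1)(\theta_k-d_k),
\]
whose bracket is nonpositive by the preceding inequality. This yields
\[
a_k-a_{k-1}\le(2k+1)(\theta_k-d_k)\le(2k+1)\theta_k.
\]

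The third step is to invoke Fact~\ref{F:kThetak_finite}: the summability $\sum_k k\theta_k<\infty$ makes $\epsilon_k:=(2k+1)\theta_k$ a summable nonnegative sequence. Since $a_k\ge 0$, the sequence $b_k:=a_k-\sum_{j=1}^k\epsilon_j$ is nonincreasing and bounded below by $-\sum_{j=1}^\infty\epsilon_j$, hence convergent, which gives convergence of $a_k$. The identical scheme, applied after multiplying \eqref{E:mechanical_energy_decrease} by $2s(k+1)^2$ from the outset, likewise delivers convergence of $2s(k+1)^2\theta_{k+1}+k^2\|x_{k+1}-x_k\|^2$; convergence of the lemma's precise linear combination then follows by the usual rescaling adjustment on the $\theta$-term.

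The principal obstacle -- really the sole nontrivial design choice -- is locating the correct weight for $d_k$ in the Lyapunov: the value $k^2$ (neither $(k-1)^2$ nor $(k+1)^2$) is precisely what makes the bracket in the decomposition above manifestly nonpositive while exposing the summable residual $(2k+1)\theta_k$, by absorbing the mismatched velocity coefficients $(k+1)^2$ and $(k-1)^2$ produced by the step-one inequality. Once this bookkeeping is in place, the remainder -- summability of $\sum k\theta_k$ via Fact~\ref{F:kThetak_finite}, and the quasi-Fej\'er principle for nonnegative sequences with summable forward increments -- is entirely standard.
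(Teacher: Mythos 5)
Your argument is correct and is essentially the paper's own proof: the paper works with the same Lyapunov sequence $k^2 d_k+(k+1)^2\theta_{k+1}$, derives the summable increment bound $2(k+1)\theta_{k+1}$ (versus your $(2k+1)\theta_k$) by multiplying \eqref{E:mechanical_energy_decrease} by $k^2$ rather than $(k+1)^2$, and concludes with the same ``bounded below plus summable increments'' principle via Fact \ref{F:kThetak_finite}. The only loose end is your final ``rescaling adjustment'': convergence of $2s\,a_k$ does not by itself yield convergence of the combination carrying coefficient $1$ on the $\theta$-term, but this normalization mismatch is present in the paper's own proof as well (which establishes convergence of $\tfrac{1}{2s}k^2\|x_{k+1}-x_k\|^2+(k+1)^2\theta_{k+1}$) and is harmless, since any fixed positive linear combination serves equally well in the proof of Theorem \ref{T:main}.
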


\begin{proof}
Since $k+\alpha-1\ge k$, inequality \eqref{E:mechanical_energy_decrease} gives
$$k^2d_k-(k -1)^2d_{k-1}\le k^2(\theta_k-\theta_{k+1}).$$
But
$$(k+1)^2\theta_{k+1}-k^2\theta_k=k^2(\theta_{k+1}-\theta_k)+(2k+1)\theta_{k+1}\le k^2(\theta_{k+1}-\theta_k)+2(k+1)\theta_{k+1},$$
and so
	\begin{equation} \label{E:dk_thetak}
\Big[k^2d_k+(k+1)^2\theta_{k+1}\Big]-\Big[(k -1)^2d_{k-1}+k^2\theta_k\Big]\le 2(k+1)\theta_{k+1}.
\end{equation} 
The result is obtained by observing that $k^2d_k+(k+1)^2\theta_{k+1}$ is bounded from below and the right-hand side of \eqref{E:dk_thetak} is summable (by Fact \ref{F:kThetak_finite}). 
\end{proof}

We are now in a position to prove Theorem \ref{T:main}. \\

\noindent{\bf Proof of Theorem \ref{T:main}.} From Facts \ref{F:kThetak_finite} and \ref{F:k_vk2_finite}, we deduce that
$$\sum\limits_{k=1}^\infty \frac{1}{k}\Big[k^2\|  x_{k+1}  - x_{k} \|^2+(k+1)^2\big(\Theta(x_{k+1})-\Theta(x^*)\big)\Big]<+\infty.$$
Combining this with Lemma \ref{F:lim_kvk_exists}, we obtain
$$\lim\limits_{k\to\infty}\Big[k^2\|  x_{k+1}  - x_{k} \|^2+(k+1)^2\big(\Theta(x_{k+1})-\Theta(x^*)\big)\Big]=0.$$
Since all the terms are nonnegative, we conclude that both limits are 0, as claimed.\hfill$\blacksquare$

\begin{remark}
Facts \ref{F:kThetak_finite} and \ref{F:k_vk2_finite}, also imply that the function values and the velocities satisfy
$$\liminf\limits_{k\to\infty}k^2\ln(k)\Big((\Psi + \Phi)(x_k)-  \min(\Psi + \Phi)\Big) = 0\qquad\hbox{and}\qquad\liminf\limits_{k\to\infty}k\ln(k)\|x_{k+1}-x_{k}\|=0,$$ 
respectively. Indeed, if $\beta_k$ is any nonnegative sequence such that $\sum\limits_{k=1}^{\infty}\frac{\beta_k}{k}<\infty$ (which holds for $(k^2d_k)$ and $(k^2\theta_k)$), then it cannot be true that $\liminf\limits_{k\to\infty}\beta_k\ln(k)\ge \varepsilon>0$. Otherwise, $\frac{\beta_k}{k}\ge\frac{\varepsilon}{k\ln(k)}$ for all sufficiently large $k$, and the series above would be divergent.
\end{remark}

\subsection{Convergence of the sequence.}

It is possible to prove that the sequences generated by \eqref{algo7b} converge weakly to minimizers of $\Psi+\Phi$ when $\alpha>3$. Although this was already shown in \cite{ACPR}, we provide a proof following the preceding ideas, for completeness.

\begin{Theorem}
Let  $\Psi: \mathcal H \to  \mathbb R \cup \lbrace + \infty  \rbrace  $ be proper, lower-semicontinuous and convex, and let $\Phi: \mathcal H \to  \mathbb R$ be convex and continuously differentiable with $L$-Lipschitz continuous gradient. Suppose that $S=\argmin (\Psi + \Phi)\neq\emptyset$, and let
$(x_k)$ be a sequence generated by algorithm \eqref{algo7b} with $\alpha > 3$ and $ 0< s < \frac{1}{L} $. Then, the sequence $(x_k)$ converges weakly to a point in $S$.
\end{Theorem}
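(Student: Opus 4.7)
My plan is to verify the two hypotheses of Opial's lemma in Hilbert space: (i) for every $x^*\in S$, $\lim_{k\to\infty}\|x_k-x^*\|$ exists, and (ii) every weak sequential cluster point of $(x_k)$ lies in $S$. Condition (ii) is immediate from Theorem~\ref{T:main}: since $\Theta=\Psi+\Phi$ is proper, convex and lower-semicontinuous, hence weakly lower-semicontinuous, any weak cluster point $\bar x$ satisfies $\Theta(\bar x)\le \liminf_k \Theta(x_k)=\min \Theta$, and so $\bar x\in S$.

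For condition (i), the key idea is to work with the auxiliary sequence $z_k=x_k+\frac{k-1}{\alpha-1}(x_k-x_{k-1})$ rather than with $x_k$ directly, since Theorem~\ref{T:main} gives $\|z_k-x_k\|=\frac{k-1}{\alpha-1}\|x_k-x_{k-1}\|\to 0$, so convergence of $\|z_k-x^*\|$ is equivalent to convergence of $\|x_k-x^*\|$. To obtain convergence of $\|z_k-x^*\|$, I would revisit the calculation that led to \eqref{algo17}, but this time retain the term $\Theta(x_{k+1})-\Theta(x_k)$ in the estimate instead of discarding it. Combining \eqref{E:G_xk_yk} and \eqref{E:G_xk_x*} with the identity $\frac{\alpha-1}{k+\alpha-1}(z_k-x^*)=\frac{k}{k+\alpha-1}(y_k-x_k)+\frac{\alpha-1}{k+\alpha-1}(y_k-x^*)$ and the expansion of $\|z_{k+1}-x^*\|^2$ used in the excerpt, the $\|G_s(y_k)\|^2$ terms cancel, and one arrives at an inequality of the form
$$\|z_{k+1}-x^*\|^2+c_{k+1}\theta_{k+1}\le \|z_k-x^*\|^2+c_{k+1}\theta_k,$$
with $c_k:=\frac{2s(k-1)(k+\alpha-2)}{(\alpha-1)^2}$ and $\theta_k=\Theta(x_k)-\Theta(x^*)\ge 0$.

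Setting $b_k:=\|z_k-x^*\|^2+c_k\theta_k\ge 0$, this rearranges to
$$b_{k+1}-b_k\le (c_{k+1}-c_k)\theta_k=\frac{2s(2k+\alpha-2)}{(\alpha-1)^2}\,\theta_k,$$
whose right-hand side is summable by Fact~\ref{F:kThetak_finite}. A standard argument (a nonnegative sequence whose forward differences have summable positive parts converges) then yields convergence of $(b_k)$, and since $c_k\theta_k=O(k^2\theta_k)\to 0$ by Theorem~\ref{T:main}, the quantity $\|z_k-x^*\|^2=b_k-c_k\theta_k$ converges. Opial's lemma then delivers the weak convergence of $(x_k)$ to some $x_\infty\in S$. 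The main obstacle, I expect, is identifying the correct Lyapunov correction $c_k\theta_k$: the term $\theta_{k+1}-\theta_k$ that appears in the natural Fejér-type inequality for $\|z_k-x^*\|^2$ has no definite sign and must be absorbed into the right auxiliary quantity.
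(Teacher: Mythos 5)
Your proof is correct, and it reaches the key intermediate fact --- that $\lim_{k\to\infty}\|z_k-x^{*}\|$ exists --- by a genuinely different mechanism from the paper's. The paper expands $\|z_k-x^{*}\|^2$ in terms of $h_k=\|x_k-x^{*}\|^2$, reduces the question to the convergence of the anchor sequence $\delta_k=(k-1)(h_k-h_{k-1})+(\alpha-1)h_k$, and bounds $\delta_{k+1}-\delta_k$ by $2(k+\alpha-1)\|x_k-x_{k-1}\|^2$ using $\|x_{k+1}-x^{*}\|\le\|y_k-x^{*}\|$ (a consequence of \eqref{E:G_xk_x*}); the summability input there is Fact \ref{F:k_vk2_finite}. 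You instead stay at the level of the energy estimate: your $b_k=\|z_k-x^{*}\|^2+c_k\theta_k$ is, up to the factor $\alpha-1$, the energy $\mathcal E(k)$ of \eqref{algo9b} with the weight $(k+\alpha-2)^2$ replaced by $(k-1)(k+\alpha-2)$, and your summability input is Fact \ref{F:kThetak_finite}. Your computation checks out: multiplying the recursion displayed just before the derivation of \eqref{E:energy_decrease} by $\frac{2s}{(\alpha-1)^2}(k+\alpha-1)^2$ and using $(k+\alpha-1)^2\ge k(k+\alpha-1)$ gives exactly $\|z_{k+1}-x^{*}\|^2+c_{k+1}\theta_{k+1}\le\|z_k-x^{*}\|^2+c_{k+1}\theta_k$ with $c_{k+1}=\frac{2sk(k+\alpha-1)}{(\alpha-1)^2}$, and indeed $c_{k+1}-c_k=\frac{2s(2k+\alpha-2)}{(\alpha-1)^2}$, so $(b_{k+1}-b_k)_+$ is summable by Fact \ref{F:kThetak_finite}. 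Both arguments then finish identically: discard a correction term that vanishes by Theorem \ref{T:main}, pass from $z_k$ to $x_k$ via $k\|x_k-x_{k-1}\|\to0$, and invoke Opial's lemma after checking that weak cluster points minimize $\Theta$ (which you do explicitly and the paper leaves implicit). Your route is the more economical of the two, and it can even be compressed further: since $\mathcal E(k)$ converges (Fact \ref{F:energy_decrease}) and $(k+\alpha-2)^2\theta_k\to0$ (Theorem \ref{T:main}), the convergence of $(\alpha-1)\|z_k-x^{*}\|^2=\mathcal E(k)-\frac{2s}{\alpha-1}(k+\alpha-2)^2\theta_k$ can be read off directly, with no new Fej\'er-type inequality needed. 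What the paper's longer detour produces is a quasi-Fej\'er estimate on $\|x_k-x^{*}\|^2$ itself, which is of some independent interest but not logically necessary for the stated theorem.
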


\begin{proof}
Using the definition \eqref{algo9c} of $z_{k}$, we write 
\begin{eqnarray*}
\| z_k -x^{*} \|^2 & = & \left(\frac{k -1} {\alpha -1}\right)^2\| x_{k} - x_{k-1}\|^2 + 2\frac{k-1}{\alpha -1} \left\langle x_{k} -x^{*}, x_{k} - x_{k-1} \right\rangle + \| x_{k} -x^{*}\|^2\\
& = & \left[\left(\frac{k -1} {\alpha -1}\right)^2+\left(\frac{k -1} {\alpha -1}\right)\right]\| x_{k} - x_{k-1}\|^2+\left(\frac{k -1} {\alpha -1}\right)\Big[\|x_k-x^*\|^2-\|x_{k-1}-x^*\|^2\Big]+\| x_{k} -x^{*}\|^2.
\end{eqnarray*}
We shall prove that $\lim\limits_{k\to\infty}\| z_k -x^{*} \|$ exists. By Lemma \ref{F:lim_kvk_exists} (or Theorem \ref{T:main}) and Fact \ref{F:k_vk2_finite}, it suffices to prove that
$$\delta_k:=(k-1)\Big[\|x_k-x^*\|^2-\|x_{k-1}-x^*\|^2\Big]+(\alpha -1)\| x_{k} -x^{*}\|^2$$
has a limit as $k\to\infty$. Clearly, $(\delta_k)$ is bounded, by Facts \ref{F:Thetak=Ok2} and \ref{F:k_vk2_finite}. Write $h_k:=\| x_{k} -x^{*}\|^2$ and notice that
\begin{eqnarray} \label{E:delta_k_k-1}
\delta_{k+1} -  \delta_{k}  & = & (\alpha -1)( h_{k+1} -h_k)+ k(h_{k+1} -h_{k} ) -(k-1)(h_{k} -h_{k-1} )\nonumber \\
& = & (k+ \alpha -1)( h_{k+1} -h_k)  -(k-1)(h_{k} -h_{k-1} ).
\end{eqnarray}

On the other hand, from \eqref{E:G_xk_x*}, we obtain
$$\Theta(x_{k+1})-\Theta(x^*)\le \langle G_s(y_k),y_k-x^*\rangle -\frac{s}{2}\|G_s(y_k)\|^2.$$
Since $x_{k+1}=y_k-sG_s(y_k)$, we have
\begin{eqnarray*}
0 & \le & 2\langle y_k-x_{k+1},y_k-x^*\rangle -\|y_k-x_{k+1}\|^2\\
& = & \|y_k-x_{k+1}\|^2+\|y_k-x^*\|^2-\|x_{k+1}-x^*\|^2-\|y_k-x_{k+1}\|^2,
\end{eqnarray*}
and so
\begin{eqnarray*}
\|x_{k+1}-x^*\|^2 & \le & \|y_k-x^*\|^2\\
& = & \left\|x_{k}-x^* + \frac{k -1}{k  + \alpha -1} ( x_{k}  - x_{k-1})\right\|^2\\
& = & \|x_{k}-x^*\|^2+\left(\frac{k-1}{k+\alpha-1}\right)^2\|x_{k}  - x_{k-1}\|^2+2\frac{k-1}{k+\alpha-1}\langle x_{k}-x^*,x_{k}- x_{k-1}\rangle\\
& = & \|x_{k}-x^*\|^2+\left[\left(\frac{k-1}{k+\alpha-1}\right)^2+\frac{k-1}{k+\alpha-1}\right]\|x_{k}  - x_{k-1}\|^2 +\frac{k-1}{k+\alpha-1}\Big[\|x_{k+1}-x^*\|^2-\|x_k-x^*\|^2\Big]\\
& \le & \|x_{k}-x^*\|^2+2\|x_k-x_{k-1}\|^2+\frac{k-1}{k+\alpha-1}\Big[\|x_k-x^*\|^2-\|x_{k-1}-x^*\|^2\Big].
\end{eqnarray*}
In other words,
$$(k+\alpha-1)(h_{k+1}-h_k)-(k-1)(h_k-h_{k-1})\le 2(k+\alpha-1)\|x_k-x_{k-1}\|^2.$$
Injecting this in \eqref{E:delta_k_k-1}, we deduce that
$$\delta_{k+1} -  \delta_{k} \le 2(k+ \alpha -1)\|x_k-x_{k-1}\|^2.$$
Since the right-hand side is summable and $(\delta_k)$ is bounded, $\lim\limits_{k\to\infty}\delta_k$ exists. It follows that $\lim\limits_{k\to\infty}\| z_k -x^{*} \|$ exists. In view of Theorem \ref{T:main} and the definition \eqref{algo9c} of $z_{k}$, $\lim\limits_{k\to\infty}\|x_k-x^*\|$ exists. Since this holds for any $x^*\in S$, Opial's Lemma shows that the sequence $(x_k)$ converges weakly, as $k\to+\infty$, to a point in $S$.
\end{proof}

\subsection{Stability under additive errors} Consider the inexact version of Algorithm \eqref{algo7b} given by
\begin{equation}\label{E:FISTA_err}
\left\{
\begin{array}{rcl}
y_k&= &  x_{k} + \frac{k -1}{k  + \alpha -1} ( x_{k}  - x_{k-1}) \\
\rule{0pt}{20pt}
x_{k+1}& = &\mbox{prox}_{s \Phi} \left( y_k- s (\nabla \Psi (y_k) -g_k)\right). 
\end{array}\right.
\end{equation}
The second relation means that
$$y_k- s\nabla \Psi (y_k)\in x_{k+1}+s\Big(\partial\Phi(x_{k+1})+B(0,\varepsilon_{k+1})\Big)$$
for any $\varepsilon_{k+1}>\|g_k\|$. It turns out that it is possible to give a tolerance estimation for the sequence of errors $(g_k)$ in order to ensure that all the asymptotic properties of \eqref{algo7b} (including the $o(k^{-2})$ order of convergence) hold for \eqref{E:FISTA_err}. More precisely, we have the following:

\begin{Theorem}\label{T:errors}
Let  $\Psi: \mathcal H \to  \mathbb R \cup \lbrace + \infty  \rbrace  $ be proper, lower-semicontinuous and convex, and let $\Phi: \mathcal H \to  \mathbb R$ be convex and continuously differentiable with $L$-Lipschitz continuous gradient. Suppose that $S=\argmin (\Psi + \Phi)\neq\emptyset$, and let
$(x_k)$ be a sequence generated by algorithm \eqref{E:FISTA_err} with $\alpha > 3$ and $ 0< s < \frac{1}{L} $. If $\sum_{k=1}^\infty k\|g_k\|<+\infty$, then, the function values and the velocities satisfy $\lim\limits_{k\to\infty}k^2\Big((\Psi + \Phi)(x_k)-  \min(\Psi + \Phi)\Big) = 0$ and $\lim\limits_{k\to\infty}k\|x_{k+1}-x_{k}\|=0$, respectively. Moreover, $(x_k)$ converges weakly to a point in $S$.
\end{Theorem}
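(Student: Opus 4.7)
My plan is to replay the proofs of Facts \ref{F:energy_decrease}--\ref{F:k_vk2_finite}, Lemma \ref{F:lim_kvk_exists}, Theorem \ref{T:main} and the weak convergence theorem, treating $g_k$ as a perturbation, and then verify that each new error term that appears is summable under the hypothesis $\sum_k k\|g_k\|<+\infty$.

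The first step is to derive the perturbed analogue of the fundamental descent inequality \eqref{algo14}. The inclusion characterizing the inexact prox gives
$$y_k - s\nabla\Psi(y_k) + sg_k - x_{k+1}\in s\partial\Phi(x_{k+1}),$$
so combining with the standard descent lemma for $\nabla\Psi$ and setting $G_s(y_k):=(y_k-x_{k+1})/s$, we obtain, for every $x\in\mathcal H$,
$$\Theta(x_{k+1})\le \Theta(x) + \langle G_s(y_k), y_k - x\rangle - \frac{s}{2}\|G_s(y_k)\|^2 + \langle g_k, x_{k+1} - x\rangle.$$
Applying this at $x=x_k$ and $x=x^*$ and combining exactly as in the derivation of \eqref{E:energy_decrease} yields
$$\mathcal E(k+1) + 2s\frac{\alpha-3}{\alpha-1}k\,\theta_k \le \mathcal E(k) + 2s(k+\alpha-1)\|g_k\|\cdot\|z_{k+1} - x^*\|,$$
since the extra inner product, after weighting by $k/(k+\alpha-1)$ and $(\alpha-1)/(k+\alpha-1)$ and multiplying by $2s(k+\alpha-1)^2/(\alpha-1)$, contributes precisely a term controlled by $\|g_k\|\cdot\|z_{k+1}-x^*\|$.

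The second step is the boundedness of $\mathcal E(k)$. Using $\|z_{k+1}-x^*\|\le\sqrt{\mathcal E(k+1)/(\alpha-1)}$, the perturbed inequality reads $\mathcal E(k+1) \le \mathcal E(k) + C(k+\alpha-1)\|g_k\|\sqrt{\mathcal E(k+1)}$, and a standard Gronwall-type lemma (for instance the one used by Schmidt--Le Roux--Bach for inexact proximal methods) gives
$$\sqrt{\mathcal E(k)}\le \sqrt{\mathcal E(0)} + C\sum_{j=0}^{k-1}(j+\alpha-1)\|g_j\|,$$
which is bounded by the assumption $\sum_k k\|g_k\|<+\infty$. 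This immediately recovers Facts \ref{F:energy_decrease} and \ref{F:Thetak=Ok2}. Summing the perturbed energy inequality and using boundedness of $\mathcal E$ gives $\sum_k k\theta_k<+\infty$, hence Fact \ref{F:kThetak_finite}. The derivation of Fact \ref{F:k_vk2_finite} from \eqref{E:mechanical_energy_decrease} is based on the analogue of \eqref{E:G_xk_yk}, which in the inexact setting carries an additional term $\langle g_k, x_k - x_{k+1}\rangle\le\|g_k\|\cdot\|x_{k+1}-x_k\|$; applying Young's inequality $\|g_k\|\|x_{k+1}-x_k\|\le \tfrac{1}{4s}\|x_{k+1}-x_k\|^2 + s\|g_k\|^2$ absorbs half of $d_k$ and contributes a summable remainder (note $k\|g_k\|\to 0$ entails $\sum k\|g_k\|^2\le (\sup_k k\|g_k\|)\sum\|g_k\|<+\infty$), hence $\sum k\|x_{k+1}-x_k\|^2<+\infty$ is preserved.

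The third step is to upgrade to $o(k^{-2})$ and $o(k^{-1})$ and to prove weak convergence. In the derivation of Lemma \ref{F:lim_kvk_exists}, inequality \eqref{E:dk_thetak} gains an extra term of the form $C(k+1)\|g_k\|\cdot\|x_{k+1}-x_k\|$, which by Cauchy--Schwarz is dominated by $\big(\sum k\|x_{k+1}-x_k\|^2\big)^{1/2}\big(\sum k\|g_k\|^2\big)^{1/2}<+\infty$; hence $k^2d_k+(k+1)^2\theta_{k+1}$ still has a limit, and the proof of Theorem \ref{T:main} goes through unchanged. For weak convergence, the argument adapts by noting that the inexact version of \eqref{E:G_xk_x*} introduces in \eqref{E:delta_k_k-1} an additional term controlled by $(k+\alpha-1)\|g_k\|\cdot\|x_{k+1}-x^*\|$; since $(x_k)$ is bounded (from boundedness of $z_k$ and $\theta_k$), this is summable, so $\lim_k \delta_k$ exists and Opial's lemma concludes. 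The main obstacle, and where some care is required, is precisely the Gronwall step: the bound on $\mathcal E(k)$ depends on $\|z_{k+1}-x^*\|$, which itself depends on $\mathcal E(k+1)$, and the fact that this loop closes without growth is exactly what $\sum k\|g_k\|<+\infty$ buys us.
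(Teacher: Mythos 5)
Your first two steps coincide with the paper's own argument: the perturbed descent inequality, the resulting estimate $\mathcal E(k+1)+2s\frac{\alpha-3}{\alpha-1}k\theta_k\le\mathcal E(k)+2s(k+\alpha-1)\|g_k\|\,\|z_{k+1}-x^*\|$, and the Gronwall-type closure (the paper invokes \cite[Lemma A.9]{ACPR} with $a_k=\|z_k-x^*\|$, which is the same device as your bound on $\sqrt{\mathcal E(k)}$). Up to the analogues of Facts \ref{F:energy_decrease}--\ref{F:kThetak_finite}, your proposal is correct and matches the paper's sketch.

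The gap is in how you propagate the error through the recursion \eqref{E:mechanical_energy_decrease} and through Lemma \ref{F:lim_kvk_exists}. The perturbed version of \eqref{E:mechanical_energy_decrease} reads $\theta_{k+1}+d_k\le\theta_k+\frac{(k-1)^2}{(k+\alpha-1)^2}d_{k-1}+\|g_k\|\,\|x_{k+1}-x_k\|$, and to obtain Fact \ref{F:k_vk2_finite} one multiplies by $(k+1)^2$ and telescopes; the net coefficient of $d_k$ that survives the telescoping is only $2k+1$. Your Young inequality $\|g_k\|\,\|x_{k+1}-x_k\|\le\frac{1}{4s}\|x_{k+1}-x_k\|^2+s\|g_k\|^2$ gives away $\frac{1}{2}d_k$, i.e.\ $\frac{1}{2}(k+1)^2d_k$ after weighting, which is an order of magnitude larger than the available margin $2k+1$; the telescoped sum then no longer controls $\sum_k kd_k$ (the per-term coefficient $\frac{1}{2}(k+1)^2-k^2$ is negative for $k\ge 3$). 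Likewise, in Lemma \ref{F:lim_kvk_exists} the extra term entering \eqref{E:dk_thetak} is $k^2\|g_k\|\,\|x_{k+1}-x_k\|$, not $(k+1)\|g_k\|\,\|x_{k+1}-x_k\|$, and your Cauchy--Schwarz bound would in fact require $\sum_k k^3\|g_k\|^2<\infty$, which does not follow from $\sum_k k\|g_k\|<\infty$ (take $\|g_k\|=k^{-3/2}$ on the sparse set $k=2^{2^j}$ and $0$ elsewhere). Both steps can be repaired by the same self-bounding device you already use for $\mathcal E$: bound $(k+1)^2\|g_k\|\,\|x_{k+1}-x_k\|\le\sqrt{2s}\,(k+1)\|g_k\|\sqrt{M_K}$ with $M_K:=\sup_{j\le K}(j+1)^2d_j$, deduce $M_K\le C+C'\sqrt{M_K}$ and hence $\sup_k k\|x_{k+1}-x_k\|<\infty$, and only then conclude that the perturbations $\sum_k k^2\|g_k\|\,\|x_{k+1}-x_k\|\le C''\sum_k k\|g_k\|$ are summable. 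With that replacement the rest of your argument (including the weak-convergence step, where the extra term $(k+\alpha-1)\|g_k\|\,\|x_{k+1}-x^*\|$ is indeed summable once $(x_k)$ is shown to be bounded) goes through.
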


The key idea is to observe that, for each $k\ge 1$, we have 
$$\mathcal E(k) \leq \mathcal E(0) + \sum_{j=0}^{k-1} 2s\left( j + \alpha -1\right)    \left\langle g_j, z_{j+1}- x^{*}  \right\rangle$$
(with the same definitions of $z_k$ and $\mathcal E(k)$ given in \eqref{algo9c} and \eqref{algo9b}, respectively). This implies
$$\|z_{k}-x^{*}\|^2 \leq \frac{1}{\alpha -1}\mathcal E(0) + \frac{2s}{\alpha-1}\sum_{j=1}^{k}\left(j + \alpha-2\right)\|g_{j-1}\|\|z_{j}-x^{*}\|.$$
Then, we apply Lemma \cite[Lemma A.9]{ACPR} with $a_k = \| z_{k} -x^{*} \|$ to deduce that the sequence $(z_k)$ is bounded and so, the modified energy sequence $(\mathcal F(k))$, given by
$$\mathcal F (k):= \frac{2s}{\alpha -1} \left( k + \alpha -2\right)^2    (\Theta (x_k) - \Theta(x^{*}) + (\alpha -1)
 \| z_k -x^{*} \|^2  + \sum_{j=k}^{\infty} 2s\left( j + \alpha -1\right)    \left\langle g_j, z_{j+1}- x^{*}  \right\rangle,$$
is well defined and nonincreasing. The rest of the proof follows pretty much the arguments given above with $\mathcal E$ replaced by $\mathcal F$ (see also \cite[Section 5]{ACPR}).\\

Inexact FISTA-like algorithms have also been considered in \cite{SLB,VSBV}. It would be interesting to obtain similar order-of-convergence results under {\it relative error} conditions.\\

\noindent{\bf Acknowledgement.} The authors thank Patrick Redont for his valuable remarks.


\begin{thebibliography}{10}

\bibitem{AA1} {\sc F. Alvarez, H. Attouch}, {\it  An inertial proximal method for maximal monotone operators via discretization of a nonlinear oscillator with damping}, Set-Valued Analysis,  9 (2001), No. 1-2, pp.  3--11. \smallskip
\bibitem{ACPR} {\sc H. Attouch, Z. Chbani, J. Peypouquet, P. Redont},  {\it Fast convergence of inertial dynamics and algorithms with asymptotic vanishing damping}, Paper under review.
\bibitem{BC}{\sc H. Bauschke, P. Combettes}, {\it  Convex analysis and monotone operator theory in Hilbert spaces}, CMS Books in Mathematics, Springer,   (2011). \smallskip
\bibitem{BT}{\sc A. Beck, M. Teboulle},  {\it  A fast iterative shrinkage-thresholding algorithm for linear inverse problems},  SIAM J. Imaging Sci., 2  (2009),  No. 1, pp.~183--202. \smallskip
\bibitem{CD}{\sc  A. Chambolle, C. Dossal}, {\it  On the convergence of the iterates of Fista}, HAL Id: hal-01060130 https://hal.inria.fr/hal-01060130v3
Submitted on 20 Oct 2014. \smallskip
\bibitem{Chamb_Pock} {\sc A. Chambolle, T. Pock}, {\it A remark on accelerated block coordinate descent for computing the proximity operators of a sum of convex functions}, SMAI Journal of Computational Mathematics 1 (2015), pp. 29--54. \smallskip
\bibitem{ComWaj} {\sc P.L. Combettes, V.R. Wajs}, {\it Signal recovery by proximal forward-backward splitting}, Multiscale Model. Simul., 4 (2005), pp. 1168--1200.\smallskip
\bibitem{DauDefDeM} {\sc I. Daubechies, M. Defrise, C. De Mol,} {\it An iterative thresholding algorithm for linear
inverse problems with a sparsity constraint}, Comm. Pure Appl. Math., 57 (2004), pp. 1413--1457.\smallskip 
\bibitem{Gol} {\sc A.A. Goldstein}, {\it Convex programming in Hilbert space}, Bulletin of the American Mathematical Society 70 (1964) pp. 709--710.\smallskip
\bibitem{LevPol} {\sc E.S. Levitin, B.T. Polyak}, {\it Constrained minimization problems}, USSR Computational Mathematics and Mathematical Physics 6 (1966) pp. 1--50. \smallskip
\bibitem{LioMer} {\sc P.L. Lions, B. Mercier}, {\it Splitting algorithms for the sum of two nonlinear operators}, SIAM J. Numer. Anal., 16 (1979), pp. 964--979.\smallskip
\bibitem{May} {\sc R. May}, {\it Asymptotic for a second order evolution equation with convex potential and vanishing damping term}, arXiv:1509.05598. \smallskip 
\bibitem{MO}{\sc A. Moudafi, M. Oliny}, {\it  Convergence of a splitting inertial proximal method for monotone operators}, J. Comput. Appl. Math., 155  (2003), No. 2, pp.   447--454. \smallskip
\bibitem{Nest1}{\sc  Y. Nesterov}, {\it   A method of solving a convex programming problem with convergence rate
$\mathcal O(1/k^2)$}, Soviet Mathematics Doklady,  27  (1983), pp.~ 372--376. \smallskip
\bibitem{Nest2}{\sc  Y. Nesterov}, {\it  Introductory lectures on convex optimization: A basic course}, volume 87 of Applied Optimization. Kluwer Academic Publishers, Boston, MA, 2004. \smallskip
\bibitem{Nest3}{\sc  Y. Nesterov}, {\it  Smooth minimization of non-smooth functions}, Mathematical programming, 103 (2005), No. 1, pp.~127--152. \smallskip
\bibitem{Nest4}{\sc  Y. Nesterov}, {\it  Gradient methods for minimizing composite objective function}, CORE Discussion Papers, 2007. \smallskip
\bibitem{Op} {\sc Z. Opial}, {\it  Weak convergence of the sequence of successive approximations for nonexpansive mappings},  Bull. Amer. Math. Soc.,  73  (1967), pp. 591--597. \smallskip
\bibitem{PB}{\sc N. Parikh, S. Boyd}, {\it Proximal algorithms},  Foundations and trends in optimization, volume 1, (2013), pp. 123-231. \smallskip
\bibitem{Pas} {\sc G.B. Passty}, {\it Ergodic convergence to a zero of the sum of monotone operators in Hilbert space}, J. Math. Anal. Appl., 72 (1979), pp. 383--390.\smallskip 
\bibitem{Pey}{\sc J. Peypouquet}, {\it Convex optimization in normed spaces: theory, methods and examples}. Springer, 2015. \smallskip
\bibitem{LP}{\sc D.A. Lorenz, T. Pock}, {\it  An inertial forward-backward algorithm for monotone inclusions}, J. Math. Imaging Vision, pp. 1-15, 2014. (online). \smallskip
\bibitem{SLB}{\sc M. Schmidt, N. Le Roux, F. Bach}, {\it  Convergence rates of inexact proximal-gradient methods for convex optimization}, NIPS'11 - 25 th Annual Conference on Neural Information Processing Systems, Dec 2011, Grenada, Spain. (2011) HAL inria-00618152v3. \smallskip
\bibitem{VSBV}{\sc S. Villa, S. Salzo, L. Baldassarres, A. Verri}, {\it Accelerated and inexact forward-backward}, SIAM J. Optim., 23  (2013), No. 3, pp. 1607--1633. \smallskip
\bibitem{SBC}{\sc W. Su, S. Boyd, E.J. Cand\`es}, {\it A Differential equation for modeling Nesterov's accelerated gradient method: theory and insights}. Neural Information Processing Systems (NIPS) 2014. 
\end{thebibliography}
\end{document}